\newcommand{\R}{\mathbb{R}}
\newcommand{\Z}{\mathbb{Z}}
\newcommand{\F}{\mathbb{F}_7}
\newcommand{\op}{\operatorname}
\newcommand{\Map}{\op{Map}}
\newtheorem{Thm}{Theorem}
\newtheorem{Ex}[Thm]{Example}
\newtheorem{Rem}[Thm]{Remark}
\newtheorem{Prop}[Thm]{Proposition}
\newtheorem{Lem}[Thm]{Lemma}
\title{The string coproduct "knows" Reidemeister/Whitehead torsion}
\author{Florian Naef}
\address{Department of Mathematical Sciences, University of Copenhagen, Copenhagen, Denmark}
\email{flna@math.ku.dk}
\begin{document}

\maketitle

\begin{abstract}
We show that the string coproduct is not homotopy invariant. More precisely, we show that the (reduced) coproducts are different on $L(1,7)$ and $L(2,7)$. Moreover, the coproduct on $L(k,7)$ can be expressed in terms of the Reidemeister torsion and hence transforms with respect to the Whitehead torsion of a homotopy equivalence. The string coproduct can thereby be used to compute the image of the Whitehead torsion under the Dennis trace map.
\end{abstract}

\section{Introduction}
Given a compact oriented manifold $M$ of dimension $n$, Chas and Sullivan define a number of operations on the homology of the free loop space $LM = \Map(S^1, M)$ \cite{ChasSullivan, ChasSullivan2}. The most prominent ones are the string product, which is an operation of the type
$$
\star \colon H_\bullet(LM \times LM) \to H_{\bullet - n}(LM),
$$
the string coproduct
$$
\Delta \colon H_\bullet(LM, M) \to H_{\bullet - n + 1}(LM \times LM, M \times LM \cup LM \times M),
$$
and the circle action
$$
B \colon H_\bullet(LM) \to H_{\bullet + 1}(LM).
$$
The string product and coproduct are defined in terms of intersections of chains satisfying certain transversality condition. In particular, it is not a priori clear whether or not they depend on the manifold structure beyond its homotopy type. Or said differently, one can ask whether a homotopy equivalence $f \colon M_1 \to M_2$ that preserves the orientation classes, induces a map $f \colon H_\bullet(LM_1) \to H_\bullet(LM_2)$ that intertwines all the above operations, i.e. $(\star, \Delta, B)$. The operator $B$ is clearly homotopy-invariant. For the string product $\star$, it is shown in \cite{CohenKleinSullivan, crabb2008loop, gruher2008generalized} (or could be deduced from \cite{CohenJones02}) that it is homotopy-invariant. We show in this short note that this is \emph{not} true for the string coproduct. To that extent, we compute enough string coproducts on lens spaces to show that it is sensitive to Reidemeister torsion and transforms with respect to the Whitehead torsion. In particular, string topology can tell $L(1,7)$ and $L(2,7)$ apart. Moreover, we verify (in a certain range) the transformation formula
\begin{equation}
\label{eqn:mainconj}
\Delta f(x) = f(\Delta(x)) + f(x \star d\log\tau(f) ),
\end{equation}
where $\Delta$ is the string coproduct, $\star$ is the string product and $\tau(f)$ is the Whitehead torsion under the Dennis trace map, which we denote by $d\log$. Naturally, one is led to conjecture that this formula is true in full generality (i.e. for all closed manifolds $M$ and all $f \in \pi_*(\operatorname{aut}(M))$). Such a transformation formula is not entirely unexpected considering the following. In \cite{NaefWillwacher} it is shown that the natural comparison map (over the reals) between loop space cohomology and Hochschild homology of the cochain algebra $C^*(M)$ can be made to intertwine coproducts. The description of the coproduct on the algebraic side, however depends on the 1-loop contributions of the partition function of a Chern-Simons type field theory. It is moreover easy to see that not every $\mathrm{Com}_\infty$-automorphism of $C^*(M)$ (the algebraic analogue of a homotopy equivalence) preserves the coproduct since it might change the 1-loop part. In particular, the algebraic analogue of the above transformation formula is true, where the Whitehead torsion term is defined as the action on the 1-loop part. Stretching the analogy a bit we would like to think that this 1-loop part merely computes (a certain expansion of) the Reidemeister torsion as in the cellular model in \cite{cattaneo2020cellular}.

The structure of the paper is as follows. First we compute the integral homology of the free loop space of a lens spaces $M = L(k,7)$ and give generators. We proceed to compute all the string coproducts of generators in $H_3(LM)$ in terms of these generators. We then show that after quotienting out certain "inconvenient"
classes we can write particularly succinct formulas for the previous calculation, and that even after "forgetting" these classes, we can still detect Reidemeister torsion and get the correction terms as in \eqref{eqn:mainconj}. Finally, we show for one particularly striking example that the transformation formula \eqref{eqn:mainconj} is also true with the "inconvenient" classes intact.

\subsection*{Acknowledgements}
I would like to thank Pavel Mnev and Konstantin Wernli, the discussion with whom inspired this note, and Pavel Safronov for helpful discussions. I also would like to thank Nathalie Wahl for discussions, suggestions and support.

\tableofcontents

\section{Lens spaces}
In the following $M$ will be a lens space of the form $L(k,7)$. That is, let $t = e^\frac{1}{7}$ where $e = \mathrm{e}^{2\pi \sqrt{-1}}$ and consider the $\Z_7$-action on $S^3 = \{ (z_1, z_2) \ | \ |z_1|^2 + |z_2|^2 = 1 \}$ generated by
$$
(z_1, z_2) \mapsto (e^\frac{1}{7}z_1, e^\frac{k}{7}z_2).
$$
There is a residual two-torus action given by
$$
(z_1,z_2) \mapsto (e^{t} z_1, e^{kt+s} z_2),
$$
for $(s,t) \in [0,1] \times [0, \frac{1}{7}]$. Note that this action is free away from the two circles
$$
K_1 = \{ z_1 = 0\}
$$
and
$$
K_2 = \{ z_2 = 0\}.
$$
Let $r$ denote the inverse of $k \in \Z_7^{\times}$.

\subsection{Homology}
The free loop space $LM$ decomposes into $7$ connected components corresponding to elements in $\Z_7$. Let $L_l M$ denote the component corresponding to $l$. There is a fibration
\begin{equation}
\label{fibr}
\Omega_l M \to L_l M \to M,
\end{equation}
where $\Omega_l M$ is the component of the based loop space corresponding to $l$. Since $\Omega M$ is group-like, the component $\Omega_l M$ is homotopy-equivalent to $\Omega_0 M$, that is the component of contractible loops. Comparing homotopy groups we see that the map
$$
\Omega S^3 \to \Omega_0 M
$$
is a homotopy-equivalence. Hence the (integral) homology of $\Omega_l M$ is given by
$$
H_*(\Omega_l M) =
\begin{cases}
\mathbb{Z} & \text{for $* = 0,2,4,\dots$} \\
0 & \text{else}.
\end{cases}
$$
The differential on the $E_2$-page of the Serre spectral sequence associated to the fibration \eqref{fibr} is zero by degree reasons.
The $E_3$-page is
$$
\begin{tikzcd}
&&&\\
\dots &&& \\
\Z & \Z_7 & 0 & \Z\\
0 & 0 & 0 & 0 \\
\Z & \Z_7 & 0 & \Z \ar[uulll]\\
0 & 0 & 0 & 0 \\
\Z & \Z_7 & 0 & \Z \ar[uulll]
\end{tikzcd},
$$
where the only possibly non-zero differentials are indicated.
From the residual torus action one can see that the map $H_*(L_l M) \to H_*(M)$ is onto (for the component of the contractible loop it is onto for any space). We will see this in more detail below. Hence the differential on the $E_3$-page also vanishes and we obtain for the homology
\begin{align*}
    H_0(L_l M) &= \Z \\
    H_1(L_l M) &= \Z_7 \\
    H_2(L_l M) &= \Z \\
    H_3(L_l M) &= \Z \oplus \Z_7 \\
    H_4(L_l M) &= \Z \\
    \dots.
\end{align*}

Let us be more precise about these identifications and give more explicit descriptions for $H_0$, $H_1$ and $H_3$. We identify $H_0(LM)$ with $\Z[\Z_7]$. Moreover, let $\Omega^1 = \Omega^1(\Z[\Z_7]) = \F[t]/(t^7-1) \frac{dt}{t}$ denote the vector space of formal de Rham 1-forms and identify
\begin{align*}
    \F[t]/(t^7-1) \frac{dt}{t} &\longrightarrow H_1(LM) = \oplus_{l \in \Z_7} H_1(L_l M) \cong \oplus_{l \in \Z_7} H_1(M) \\
    ( c_0 + c_1 t + \dots c_6 t^6 ) \frac{dt}{t} &\longmapsto (c_0, \dots, c_6).
\end{align*}
\begin{Rem}
The reason we identify $H_1$ with $\Omega_1$ and not with $\Z[\Z_7]$ is first and foremost to make the formulas later more appealing. One can justify this identification at this point by appealing to the fact that $H_1(LM) = HH_1(\Z[\Z_7]) = \Omega_1$, where $HH_1$ is Hochschild homology. Thus the identification is in particular natural with respect to automorphisms of $\pi_1 = \Z_7$. Furthermore, the circle action $H_0(LM) \to H_1(LM)$ can now be written as the "de Rham differential"
\begin{align*}
\Z[\Z_7] &\to \Omega^1(\Z[\Z_7]) \\
c_0 + c_1 t + \dots + c_6 t^6 &\mapsto c_1 t + 2 c_2 t^2 + \dots + 6 c_6 t^6 \frac{dt}{t}
\end{align*}
\end{Rem}
Let us also define $\bar{\Omega}^1 = \Omega^1 / \Z \frac{dt}{t}$, such that we can identify
$$
\bar{\Omega}^1 \cong H_1(LM, M).
$$
Similarly $\bar{\Z}[\Z_7] = \Z [ \Z_7] / \Z 1$ such that
$$
\bar{\Z}[\Z_7] \cong H_0(LM,M).
$$
For $H_3(L_l M)$, the spectral sequence gives us a short exact sequence
$$
0 \to H_1(M, H_2(\Omega_l M)) \to H_3(L_l M) \to H_3(M) \to 0.
$$
From the residual two-torus action, we can construct a number of classes in $H_3(L_l M)$ that map to the fundamental class in $H_3(M)$. Consider the following $S^1$-actions. For given integers $(l,m)$ we define
$$
(t, z_1, z_2) \mapsto \rho_{l,m}(t, z_1, z_2) :=  (e^{lt}z_1, e^{(kl+7m)t}z_2) \quad \text{for $t \in [0,\frac{1}{7} ]$}.
$$
We view $\rho_{l,m}$ as a map $M \to LM$ and denote the image of the fundamental class by $[\rho_{l,m}] \in H_3(LM)$. The class of $\rho_{l,m}$ lies in the component corresponding to $l$,
$$
[\rho_{l,m}] \in H_3(L_lM),
$$
for all $m$, as can be seen for instance by setting $z_2 = 0$. We will argue below that these classes span all of $H_3(LM)$.


\subsection{String coproduct}
\subsubsection{Definition of the string coproduct}
Let us briefly recall how the string coproduct is computed. Given a homology class $\alpha \in H_p(LM)$ let us for simplicity assume that it is represented by a map $N \to LM$, where $N$ is an oriented $p$-dimensional manifold. In particular, we are given a map
\begin{align*}
    \alpha \colon S^1 \times N &\longrightarrow M \\
    (t,n) &\longmapsto \alpha(t,n).
\end{align*}
The self-intersection locus is defined by
$$
V = \{ (t,n) \ | \ \alpha(t,n) = \alpha(0,n) \ , \ t \neq 0 \} \subset S^1 \times N,
$$
and is (under appropriate transversality assumptions) an oriented manifold of dimension $p + 1 - \dim(M)$. Splitting the loops at intersection points we obtain
\begin{align*}
    \Delta(\alpha) \colon V &\longrightarrow LM \times LM \\
    (t,n) &\longmapsto ( s \mapsto \alpha(st,n) , s \mapsto \alpha( t+ (1-t)s, n) ).
\end{align*}
The resulting class is only well defined in $H_{p-n+1}( LM/M \times LM/M ) := H_{p-n+1}( (LM,M) \times (LM,M) ) := H_{p-n+1}( LM \times LM, LM \times M \cup M \times LM )$ and is the string coproduct of the class $\alpha$. One can furthermore show that it only depends on the image of $\alpha$ in $H_p(LM,M)$.
\subsubsection{String coproduct on $L(k,7)$}
In the case of $M = L(k,7)$ the string coproduct is a map $H_*(LM) \to H_{*+1-3}(LM/M \times LM/M)$.
We consider only the component
$$
\Delta\colon H_3(LM) \to H_{3+1-3}(LM/M \times LM/M) \to H_1(LM,M) \otimes H_0(LM,M).
$$

Under the identifications from the previous section the string coproduct gives a map
$$
H_3(LM) \to \bar{\Omega}^1 \otimes \bar{\Z}[\Z_7] = \F[t, t_2]\frac{dt}{t} /\left((t^7-1, t_2^7-1) \oplus \Omega^1 \cdot 1 \oplus \frac{dt}{t} \Z[t_2] \right) .
$$
More concretely, we identify a monomial $t^p t_2^q \frac{dt}{t}$ with the class in $H_1(L_p M) \otimes H_0(L_q M)$ whose image under $H_1(L_p M) \otimes H_0(L_q M) \to H_1(M) \otimes H_0(L_q M) \cong \Z_7$ is the canonical generator.


We now compute the coproduct of classes $[\rho_{l,m}]$ for $(l,m)$ positive coprime integers.

To compute the coproduct we have to find $(t,z_1,z_2)$ where
$$
\rho_{l,m}(t,z_1,z_2) = (z_1,z_2).
$$
Away from the circles $K_1 = \{ z_1 = 0 \}$ and $K_2 = \{ z_2 = 0 \}$ the action is free, since in that case the above equation reads as
$$
\begin{pmatrix}
l \\ kl +7m 
\end{pmatrix} t \in \Z^2 + \frac{1}{7}\begin{pmatrix}
1 \\ k
\end{pmatrix}\Z = \begin{pmatrix}
0 \\ 1
\end{pmatrix}\Z \oplus \begin{pmatrix}
1 \\ \frac{k}{7}
\end{pmatrix} \Z,
$$
whose solutions are $\frac{1}{7}\Z$ since $l$ and $m$ were coprime. Hence we 
only need to consider the self-intersection loci on the circles $K_1 = \{ z_1 = 0 \}$ and $K_2 = \{ z_2 = 0 \}$.
For $K_2 = \{ z_2 = 0 \}$ we chose the (orientation-preserving) coordinates $(\alpha, z)$ around $K_2$ via the assignment
$$
(\alpha, z) \mapsto (e^\alpha \sqrt{1 - |z|^2}, z).
$$
In these coordinates the action reads as
$$
\rho_{l,m}^c(t, \alpha, z) = (\alpha + lt, e^{kl+7m}t z).
$$
We obtain that the self-intersection locus is
$$
\{ (t,\alpha,z) \ | \ \rho_{l,m}^c(t, \alpha, z) - \rho_{l,m}^c(0, \alpha, z) = 0 \} =
\{ (t,\alpha,0) \ | \ t= \frac{1}{7l}, \frac{2}{7l}, \dots, \frac{l-1}{7l} \},
$$
that is a disjoint union of circles. The derivative of $\rho_{l,m}^c(t, \alpha, z) - \rho_{l,m}^c(0, \alpha, z)$ at $(\frac{n}{7l}, \alpha, 0)$ (computed on its cover) is given by
$$
\begin{pmatrix}
l & 0 & 0\\
0 & 0 & 0\\
0 & 0 & e^{(kl+7m)\frac{n}{7l}} - e^\frac{kn}{7}
\end{pmatrix},
$$
from which we see that if $l > 0$ then all the circles are oriented such that $s \to (e^s,0)$ is an orientation-preserving map. Each component of the self-intersection locus gives a term in $H_1(LM) \otimes H_0(LM)$. To identify these terms we only need to know which connected component it belongs to and what the image under $H_1(LM) \to H_1(M)$ is. The term belonging to $(\frac{n}{7l}, \alpha, 0)$ lies in the connected component associated to $t^n t_2^{l-n}$ and as we saw, the coefficient is given by the element in $H_1(M)$ that corresponds to $s \mapsto (e^s,0)$ which is the generator. Thus we get that the contribution from $K_2$ is
\[
(t t_2^{l -1} + t^2 t_2^{l -2} + \dots + t^{l-1} t_2) \frac{dt}{t}.
\]
Similarly, we obtain the contribution coming from $K_1 = \{ z_1 = 0 \}$, here the equation to solve is
$$
(0,e^{(kl+7m)t}z) = (0,z)
$$
intersection-locus is thus
$$
\{ (t,0,z_2) \ | \ t=\frac{1}{7(kl+7m)}, \frac{2}{7(kl+7m)}, \dots, \frac{kl+7m-1}{7(kl+7m)} \}.
$$
The contributions can again be expressed in terms of the class $\alpha \to (0, e^\alpha)$ which in $H_1(M)$ corresponds to $r$ where $r$ is the multiplicative inverse of $k$ mod $7$. Thus the contribution is
$$
r ( t^r t_2^{(kl+7m -1)r} t^{2r} t_2^{(kl+7m -2)r} + \dots + t^{(kl+7m -1)r} t_2^r) \frac{dt}{t}.
$$
Finally, we conclude that for any $(l,m)$ coprime
\begin{align}
\label{eqn:rhocop}
\begin{split}
\Delta([\rho_{l,m}]) =& (t t_2^{l -1} + t^2 t_2^{l -2} + \dots + t^{l-1} t_2) \frac{dt}{t} \\
&+ r ( t^r t_2^{(kl+7m -1)r} + t^{2r} t_2^{(kl+7m -2)r} + \dots + t^{(kl+7m -1)r} t_2^r) \frac{dt}{t}
\end{split}
\end{align}

It thus follows that
$$
\Delta([\rho_{l,m+n}] - [\rho_{l,m}]) = r \, n \, t^{kl+7m} (t \, t_2^6 + \dots + t^6 t_2 + t^7) \frac{dt}{t}.
$$
Recall that we are working in $H_1(LM,M) \otimes H_0(LM,M)$.
In particular, we see that we have found all lifts along $H_3(L_lM) \to H_3(M)$. More precisely, any two classes $[\rho_{l,m_1}]$ and $[\rho_{l,m_2}]$ for $m_1 \neq m_2 \text{ mod } 7$ and both coprime to $l$, are non-zero and not equal and hence span $H_3(L_lM)$. Alternatively, we actually see that the classes $[\rho_{l, 1 + nl}]$ for $n = 1, \dots, 7$ are all the lifts of the fundamental class along $H_3(L_lM) \to H_3(M)$ if $l \neq 0$. For $l = 0$ take the classes $[\rho_{7,n}]$ for $n = 1, \dots, 6$ and $[\rho_{0,0}]$.

\subsection{More convenient notation}
We wish to write to above formulas in a more convenient way. As we have seen $H_3(L_i M)$ is an extension of $H_3(M) = \Z$ by a $\Z_7$. As we have seen in the calculation above there is not much variation in the coproduct of the $\Z_7$ summand, so we are modding it out to simplify notation.
To that effect, let us denote the kernel of the map
$$
H_3(LM) \to \oplus_{i\in \Z_7} H_3(M)
$$
by $K$. Thus we can identify $H_3(LM)$ with $\Z[\Z_7]$ and our the formulas define a map
$$
H_3(LM)/K \to H_1(LM,M) \otimes H_0(LM,M) / \Delta(K),
$$
which we identify with
$$
\Z[\Z_7] \to \bar{\Omega}^1 \otimes \bar{\Z}[\Z_7] / \Delta(K).
$$
Our formulas actually lift to a map
$$
\Z[\Z_7] \to \Omega^1 \otimes \Z[\Z_7] / \Delta(K),
$$
which we will describe and at the very end project to $\bar{\Omega}^1 \otimes \bar{\Z}[\Z_7] / \Delta(K)$.
The target can be identified with the quotient of $\F[t,t_2, \frac{dt}{t}]$ by the subvector space spanned by
\begin{gather*}
t^l \frac{dt}{t}, t_2^l \frac{dt}{t}, t^l (t \, t_2^6 + \dots + t^6 t_2 + t^7) \frac{dt}{t}
\end{gather*}
Call that quotient $Q_{\F}$.

\subsection{Relation to Reidemeister and Whitehead torsion}
To rewrite the formulas in a more convenient way let us introduce a rational version of above target space. Introduce the algebra $\mathbb{Q}[t,t_2]$ with relations $t^7=1$, $t_2^7=1$. Consider the ideal $I = (t_2^6 + \dots + t^7)$ and define $A = \mathbb{Q}[t,t_2] / I$. This has the convenient effect that now $t^l - t_2^l$ are units in $A$, since $(t-t_2)(t^6 + 2 t^5 t_2 + \dots + t_2^7) = 7 + (t \, t_2^6 + \dots + t^6 t_2 + t^7)$. Let then $Q_{\mathbb{Q}}$ denote the vector space obtained by taking the quotient of $A$ by the subvector space spanned by the elements $t^l$ and $t_2^l$ and formally adjoint a symbol $\frac{dt}{t}$. Similarly there is an integral version of said space $Q_{\Z} \subset Q_{\mathbb{Q}}$. The above formulas define a map
$$
\Delta: \Z[\Z_7] \to Q_{\Z} \subset Q_{\mathbb{Q}}.
$$
After reduction mod $7$ this the component of the string corpoduct (the rationalization is merely to write down the formulas in a more conveninet way). The map $\Delta: \Z[\Z_7] \to Q_{\Z} \subset Q_{\mathbb{Q}}$ is defined by formulas
\begin{align*}
t^l \mapsto &(t t_2^{l -1} + t^2 t_2^{l -2} + \dots + t^{l-1} t_2) \frac{dt}{t} \\
&+ r ( t^r t_2^{(kl+7m -1)r} + t^{2r} t_2^{(kl+7m -2)r} + \dots + t^{(kl+7m -1)r} t_2^r) \frac{dt}{t} \\
=&(t t_2^{l -1} + t^2 t_2^{l -2} + \dots + t^{l-1} t_2 +t^l) \frac{dt}{t} \\
&+ r ( t^r t_2^{(kl+7m -1)r} + t^{2r} t_2^{(kl+7m -2)r} + \dots + t^{(kl+7m -1)r} t_2^r + t^{(kl+7m)r}) \frac{dt}{t}\\
=&(t_2^{l -1} + t t_2^{l -2} + \dots + t^{l-2} t_2 +t^{l-1}) dt \\
&+  ( t_2^{(kl+7m -1)r} + t^{r} t_2^{(kl+7m -2)r} + \dots + t^{(kl+7m -2)r} t_2^r + t^{(kl+7m-1)r}) dt^r \\
=& \frac{t^l - t_2^l}{t - t_2} dt + \frac{t^{r(kl+7m)} - t_2^{r(kl+7m)}}{t^r - t_2^r} dt^r \\
=& \frac{t^l - t_2^l}{t - t_2} dt + \frac{t^{l} - t_2^{l}}{t^r - t_2^r} dt^r \\
&= (t^l - t_2^l) d\log( (t^r-t_2^r)(t-t_2) ) \\
&= (t^l - t_2^l) d\log( R ),
\end{align*}

where $R \in Q$ is the homogenized Reidemeister torsion
$$
R = (t^r - t_2^r)(t-t_2).
$$
We refer to the lecture notes \cite[equation (58)]{MnevLecture} or \cite{milnor1966whitehead} for the fact that this is indeed the Reidemeister torsion (our convention differs slightly).
We summarize our findings in the following
\begin{Prop}
\label{prop:main1}
The string coproduct descends to
$$
\begin{tikzcd}
K \ar[r, "\Delta"] \ar[d] & \Delta(K) \ar[d] \\
H_3(LM) \ar[r, "\Delta"]\ar[d]  & H_1(LM,M) \otimes H_0(LM,M) \ar[d] \\
\oplus_{i\in \Z_7} H_3(M) \ar[r, "\mathcal{R}"] & \oplus_{i \neq 0 \in \Z_7} H_1(M) \otimes H_0(LM,M) / \Delta(K),
\end{tikzcd}
$$
where $\mathcal{R}$ is the map $t^l \mapsto (t^l - t_2^l) d\log( R )$ where $R$ is the homogenized Reidemeister torsion and the term $(t^l - t_2^l) d\log( R )$ is evaluated as explained above.
\end{Prop}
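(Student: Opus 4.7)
Almost all the work has been done in the preceding subsections; the proposition is essentially a repackaging of the calculation \eqref{eqn:rhocop}, so my plan is to assemble the pieces and check that the descent to the quotient row is well defined.

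The plan is as follows. First, recall from the discussion after \eqref{eqn:rhocop} that for each $l \in \Z_7$ the classes $[\rho_{l,m}]$ (with $(l,m)$ coprime, and with $[\rho_{0,0}]$ plus $[\rho_{7,n}]$ used in the $l=0$ sector) exhaust all lifts of the fundamental class along $H_3(L_l M) \to H_3(M)$, so that the family $\{[\rho_{l,m}]\}$ spans $H_3(LM)$ and hits a chosen generator of each $H_3(L_lM)/K \cong H_3(M)$. Thus to specify the bottom map $\mathcal R$ it suffices to compute $\Delta([\rho_{l,m}])$ modulo $\Delta(K)$ for any one choice of $m$ coprime to $l$.

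Second, I would verify that the descent to the quotient row is well defined. The computation
\[
\Delta([\rho_{l,m+n}] - [\rho_{l,m}]) = r\, n\, t^{kl+7m}(t t_2^6 + \dots + t^6 t_2 + t^7)\tfrac{dt}{t}
\]
carried out above shows exactly that changing $m$ modifies $\Delta([\rho_{l,m}])$ by an element of $\Delta(K)$, so the induced map on $H_3(LM)/K \cong \oplus_{l\in \Z_7} H_3(M)$ with values in $H_1(LM,M)\otimes H_0(LM,M)/\Delta(K)$ exists. The remaining content is then the explicit form of this induced map.

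Third, I would identify the formula with $(t^l - t_2^l)\, d\log R$. Starting from \eqref{eqn:rhocop} and working in the rational model $Q_{\mathbb{Q}}$, the chain of manipulations already displayed in the excerpt applies verbatim: I would (a) absorb the missing diagonal terms $t^l\tfrac{dt}{t}$ and $t^{(kl+7m)r}\tfrac{dt}{t}$ at the cost of elements that are zero in $Q_{\mathbb{Q}}$, (b) rewrite each geometric sum as $\tfrac{t^a - t_2^a}{t - t_2}$, using that $(t-t_2)$ and $(t^r-t_2^r)$ are units in $A$, (c) use $rk \equiv 1 \pmod 7$ and $t^7 = 1$ to reduce $r(kl+7m)$ to $l$ in the exponents, and (d) combine the two terms into $(t^l - t_2^l)\, d\log((t-t_2)(t^r - t_2^r))$. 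This yields exactly $\mathcal{R}(t^l) = (t^l - t_2^l)\, d\log R$.

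The only step that requires any genuine care is the rational lift: one must check that the relations used to pass to $Q_{\mathbb{Q}}$ (and in particular the invertibility of $t - t_2$ and $t^r - t_2^r$ modulo the ideal $(t_2^6 + \dots + t^7)$) are consistent with the quotient appearing in $Q_{\F}$, and that the final expression projects back to the original integral/mod-$7$ formula in $\bar{\Omega}^1 \otimes \bar{\Z}[\Z_7]/\Delta(K)$. This is a direct verification using the identity $(t-t_2)(t^6 + 2t^5 t_2 + \dots + t_2^7) = 7 + (t t_2^6 + \dots + t^7)$ noted in the excerpt. Once this is in place the commutativity of the diagram in the statement is immediate.
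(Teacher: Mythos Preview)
Your proposal is correct and matches the paper's approach exactly: the proposition is stated as a summary of the preceding computations (the paper introduces it with ``We summarize our findings in the following''), and your outline---spanning $H_3(LM)$ by the $[\rho_{l,m}]$, checking well-definedness via the formula for $\Delta([\rho_{l,m+n}]-[\rho_{l,m}])$, and then the algebraic rewriting of \eqref{eqn:rhocop} into $(t^l-t_2^l)\,d\log R$ in $Q_{\mathbb Q}$---is precisely the chain of arguments the paper carries out in the subsections leading up to the proposition.
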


\begin{Rem}
For us Reidemeister torsion is merely an expression of the form $(t^p - t_2^p)(t^q - t_2^q)$. We do not fully explain here what the exact space of these expressions is. We will only need that the Whitehead group acts on these expressions faithfully.
\end{Rem}

\begin{Ex}
Let us give the calculation of $\mathcal{R}$ for $L(1,7)$ and $L(2,7)$
$$L(1,7), \quad k = 1, r = 1$$
\begin{align*}
t^0 \mapsto & 0\\
t^1 \mapsto & 0\\
t^2 \mapsto & 2 t^1 t_2^1 \frac{dt}{t}\\
t^3 \mapsto & 2 t^1 t_2^2 + 2 t^2 t_2^1 \frac{dt}{t}\\
t^4 \mapsto & 2 t^1 t_2^3 + 2 t^2 t_2^2 + 2 t^3 t_2^1 \frac{dt}{t}\\
t^5 \mapsto & 2 t^1 t_2^4 + 2 t^2 t_2^3 + 2 t^3 t_2^2 + 2 t^4 t_2^1 \frac{dt}{t}\\
t^6 \mapsto & 2 t^1 t_2^5 + 2 t^2 t_2^4 + 2 t^3 t_2^3 + 2 t^4 t_2^2 + 2 t^5 t_2^1 \frac{dt}{t}\\
\end{align*}

$$L(2,7), \quad k = 2, r = 4$$
\begin{align*}
t^0 \mapsto & 0\\
t^1 \mapsto & 4 t^4 t_2^4 \frac{dt}{t}\\
t^2 \mapsto & 5 t^1 t_2^1 + 4 t^4 t_2^5 + 4 t^5 t_2^4 \frac{dt}{t}\\
t^3 \mapsto & 5 t^1 t_2^2 + 5 t^2 t_2^1 + 4 t^4 t_2^6 + 4 t^5 t_2^5 + 4 t^6 t_2^4 \frac{dt}{t}\\
t^4 \mapsto & 5 t^1 t_2^3 + 5 t^2 t_2^2 + 5 t^3 t_2^1 + 4 t^5 t_2^6 + 4 t^6 t_2^5 \frac{dt}{t}\\
t^5 \mapsto & 2 t^1 t_2^4 + 5 t^2 t_2^3 + 5 t^3 t_2^2 + 2 t^4 t_2^1 + 4 t^6 t_2^6 \frac{dt}{t}\\
t^6 \mapsto & 2 t^1 t_2^5 + 2 t^2 t_2^4 + 5 t^3 t_2^3 + 2 t^4 t_2^2 + 2 t^5 t_2^1 \frac{dt}{t}\\
\end{align*}
In particular, we see that they cannot possibly be isomorphic. In the first one there are two $i$'s such that $H_3(L_iM) \to H_1(LM/M) \otimes H_0(LM/M)$ has rank one (or rank zero after quotienting out $\Delta(K)$). In the second one there is only one such $i$. Since all the $H_3(L_i M)$ have images in different components we see that the ranks of the maps $H_3(L_i M) \to H_1(LM,M) \otimes H_0(LM, M)$ differ.
\end{Ex}

We summarize the result of the above example in
\begin{Prop}
The string coproduct coalgebras on $L(1,7)$ and $L(2,7)$ are non-isomorphic. More precisely, they are told apart by the dimension of the kernel of $\Delta \colon H_3(LM, M) \to H_1(LM/M \times LM/M)$. For $M = L(2,7)$ the coproduct is injective on $H_3(LM, M)$, while for $M = L(1,7)$ the kernel is spanned by the class $[\rho_{1,0}]$.
\end{Prop}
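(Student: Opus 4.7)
The plan is to extract the kernel of $\Delta$ on $H_3(LM, M)$ directly from the reduced coproduct tables in the preceding Example, then verify that nothing extra is hidden by the reductions modulo $K$ and $\Delta(K)$.

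First I would observe that every monomial $t^a t_2^b \frac{dt}{t}$ appearing in $\Delta(t^l)$ via formula (\ref{eqn:rhocop}) satisfies $a + b \equiv l \pmod 7$. Hence the images of the summands $H_3(L_l M) \to H_1(LM, M) \otimes H_0(LM, M)$ live in pairwise disjoint ``weight'' subspaces, so the total kernel decomposes as a direct sum over $l$. Reading off the Example's tables, the reduced map on $\Z[\Z_7] = H_3(LM)/K$ kills only $t^0$ for $L(2, 7)$, and both $t^0$ and $t^1$ for $L(1, 7)$.

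Next I would match these reduced-kernel classes with honest elements of $H_3(LM, M)$. The class $t^0$ lifts to $[\rho_{0, 1}]$, which represents the fundamental class of $M$ under $H_3(L_0 M) \to H_3(M)$ and therefore vanishes in $H_3(LM, M)$. For $L(1, 7)$ the class $t^1$ lifts to $[\rho_{1, 0}]$; feeding $(l, m) = (1, 0)$ into (\ref{eqn:rhocop}) produces two empty sums, so $\Delta([\rho_{1, 0}]) = 0$ already in $H_1(LM, M) \otimes H_0(LM, M)$ --- not merely modulo $\Delta(K)$. This yields the candidate kernel element.

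Finally I would rule out any additional kernel coming from the fibre subgroup $K = \bigoplus_l \Z_7$. Using the identity $\Delta([\rho_{l, m+1}] - [\rho_{l, m}]) = r\, t^{kl + 7m}(t t_2^6 + \cdots + t^7)\frac{dt}{t}$ I would compute $\Delta(g_l)$ for a generator $g_l$ of $K_l$ and note that, as a weight-$l$ element, it is a sum of six nonzero $\Z_7$-monomials, of which at most one (the one whose first factor sits in $H_1(L_0 M)$) is killed in passing to the relative group $H_1(LM, M) \otimes H_0(LM, M)$. Thus $\Delta(g_l) \neq 0$ for every $l$, and by Step~1 no nontrivial combination of the $g_l$ can lie in the kernel either. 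Combined with Steps~1 and~2 this pins down the kernel of $\Delta$ on $H_3(LM, M)$: trivial for $L(2, 7)$, and freely generated by $[\rho_{1, 0}]$ for $L(1, 7)$. The non-isomorphism of the coalgebras then follows because any isomorphism would preserve $\dim \ker \Delta$. I expect the main subtlety to be the bookkeeping of Step~3 --- making sure that the quotient by $\Delta(K)$ used in the Example's tables does not hide a genuine kernel element inside $K$ itself --- but the weight-grading argument makes this verification essentially mechanical.
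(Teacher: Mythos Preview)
Your approach is the same as the paper's: the paper's proof is precisely the preceding Example, i.e.\ read the coproduct off the explicit tables for $\mathcal R$, use the weight grading to separate components, and compare the number of components on which the map vanishes. Your Step~3, checking that nothing is hidden by the passage to $H_3(LM)/K$, is extra care that the paper leaves implicit.

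Two points to fix. First, in Step~2 your reasoning for $t^0$ is wrong: the fact that $[\rho_{0,1}]$ \emph{maps to} $[M]$ under $H_3(L_0M)\to H_3(M)$ does not make it vanish in $H_3(LM,M)$; what vanishes there is the \emph{image} of $[M]$ under the constant-loop inclusion $M\hookrightarrow LM$, namely $[\rho_{0,0}]$. Indeed $[\rho_{0,1}]-[\rho_{0,0}]$ is a nonzero element of $K_0$ by your own Step~3 computation. This slip is harmless for the argument, since the $l=0$ summand of $H_3(LM,M)$ is exactly $K_0$ and is therefore covered by Step~3 anyway; but the sentence as written is false.

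Second, your final summary (``trivial for $L(2,7)$'', ``freely generated by $[\rho_{1,0}]$'') cannot be literally correct over~$\Z$: the target $H_1(LM/M\times LM/M)$ is $7$-torsion while $H_3(LM,M)$ contains a free summand $\Z^6$, so the kernel of $\Delta$ always contains $7\cdot(\text{free part})$. What your argument actually establishes, and what distinguishes the two lens spaces, is that for $L(1,7)$ the kernel contains in addition the primitive class $[\rho_{1,0}]$; equivalently, the $\F_7$-ranks of $\Delta$ differ. The paper's phrasing carries the same imprecision, so you are not deviating from it, but you should be aware that Step~3 alone does not close the gap between ``reduced kernel is $\{t^0\}$'' and ``$\Delta$ is injective on $H_3(LM,M)$'' in the integral sense.
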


\subsubsection{Whithead torsion}
Let $f: L(1,7) \to L(2,7)$ be a homotopy equivalence. Let $\tau(f) \in Wh(\Z_7) = (\Z [\Z_7])^\times / \Z_7$ be its Whitehead torsion. We denote by the same symbol its image under the map
$$
Wh(\Z_7) \to HH_1(\Z[\Z_7])/ HH_1(\Z[\Z_7], \Z) = H_1(LM/M) \to H_1(LM/M \times LM/M) \overset{1 \times \sigma}{\longrightarrow} H_1(LM/M \times LM/M),
$$
where $\sigma\colon LM \to LM$ is given precomposing with the orientation-reversing diffeomorphism of $S^1$. Let us recall the definition of the Dennis trace map in our case. The Hochschild homology computes as $HH_1(\Z[\Z_7]) = \Omega^1$ and $HH_1(\Z[\Z_7], \Z) = \Z_7$. Under these identifications the Dennis trace map is then given by
\begin{align*}
    Wh(\Z_7) &\to HH_1(\Z[\Z_7])/ HH_1(\Z[\Z_7], \Z) \\
    \alpha &\mapsto \alpha^{-1} d\alpha = d\log \alpha.
\end{align*}

The calculation in the previous section partially verifies the formula
$$
\Delta f(x) = f(\Delta(x)) + f(x \star d\log \tau(f) ),
$$
where $\star$ is the string product (applied to both factors as a derivation).
Namely, recall that $R_{2,7} = f(R_{1,7} \tau(f))$ and that moreover $\tau(f) \in \Z[\Z_7]^\times$. We then have
\begin{align*}
\Delta f(t^l)  &= (t^{f(l)} - t_2^{f(l)}) d\log R_{2,7} \\
&= (t^{f(l)} - t_2^{f(l)}) (f(d\log R_{1,7}) + d\log f(\tau(f))) \\
&= f(\Delta(t^l)) + (t^{f(l)} - t_2^{f(l)}) d\log f(\tau(f))) \\
&= f(\Delta(t^l)) + f((t^l - t_2^l) d\log \tau(f))),
\end{align*}
where we used the calculation of the string product from the Appendix.
\begin{Rem}
We used the following in the previous calculation. Let $I \to \F[\Z_7] \to \F$ be the augmentation ideal. Then $(t-1) \in I$ is not a zero-divisor in the algebra $I$, hence neither is $R_1 = R_{1,7} = (t-1)^2$ nor $R_2 = f(R_{2,7}) = (t^2-1)(t^4-1)$. Let $u \in \F[\Z_7]^\times$ be such that $R_1 = R_2 u$. Then to show the identity
$$
(t^l-1)d\log(R_1) - (t^l-1)d\log(R_2) = (t^1-1)du u^{-1} \quad \text{mod $\Sigma$},
$$
it is clearly enough to show that it is true after multiplying with $R_1 R_2 = R_2 R_2 u$. Doing this we obtain
\begin{align*}
    (t^l-1)(dR_1 R_2 - dR_2 R_2 u) &= (t^l-1)(R_2 R_2 du) \\
        &= R_2 R_2(t^l-1)du u^{-1}.
\end{align*}
\end{Rem}
Specializing to $l = 1$ we obtain
$$
\Delta f(t) = f( (t - t_2) d\log \tau(f) ).
$$
\begin{Ex}
It is known that there exists a homotopy equivalence $f\colon L(1,7) \to L(2,7)$ that sends the preferred generator $t$ to $t^2$. Its Whitehead torsion (in our convention) is thus
\begin{align*}
\tau(f) &= \frac{(t^4-1)(t^2 -1)}{(t-1)^2} = (t^3 + t^2 + t + 1) (t + 1) - \Sigma \\
&= t + t^2 + t^3 - t^5 - t^6 \\
\tau(f)^{-1} &= \frac{(t^8-1)(t^8-1)}{(t^4-1)(t^2-1)} = (1 + t^4)(1 + t^2 + t^4 + t^6) - \Sigma \\
&= t^4 - t^5 + t^6,
\end{align*}
where $\Sigma = 1 + t + t^2 + \dots + t^6$.
Its image under the Dennis trace is
\begin{align*}
d\log(\tau(f)) &= (1 + 2t + 3t^2 - 5 t^4 - 6 t^5)(t^4 - t^5 + t^6) dt \\
&= (6 +  5 t + 6 t^2 + t^3 + 2 t^4 + t^5 + 2 t^6) dt,
\end{align*}
and hence (after homogenizing again to match notation from above)
\begin{align*}
    (t - t_2) d\log \tau(f) &= (4 + 2 t t_2^6 + 6 t^3 t_2^4 + 2 t^5 t_2^2) dt - ( 1 + t t_2^6 + t^2 t_2^5 + t^3 t_2^4 + t^4 t_2^3 + t^5 t_2^2 + t^6 t_2) dt
\end{align*}
and finally
\begin{align*}
    f((t - t_2) d\log \tau(f)) &= (4 + 2 t^2 t_2^5 + 6 t^6 t_2 + 2 t^3 t_2^4) d(t^2) \\
    &= (4 + 2 t^2 t_2^5 + 6 t^6 t_2 + 2 t^3 t_2^4) 2 t dt \\
    &= (t^2 + 4 t^4 t_2^5 + 5 t t_2 + 4 t^5 t_2^4) \frac{dt}{t},
\end{align*}
where we dropped multiples of $\Sigma$.
\end{Ex}

Summarizing our findings we conclude with
\begin{Prop}
The string coproduct on the lens spaces $L(k,7)$ detects Whitehead torsion. More precisely, the restriction of the string coproduct to $H_3(LM)$ after taking the quotient described in Proposition \ref{prop:main1} transforms according to formula \eqref{eqn:mainconj} and two elements in $Wh(\Z_7)$ give the same correction term if and only if they are equal under the Dennis trace map.
\end{Prop}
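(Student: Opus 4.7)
The proposition has two parts: the transformation formula itself, and the characterization of when two Whitehead torsions produce identical correction terms. My plan is to treat them separately, leaning heavily on material already developed.

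For the transformation formula, I would simply assemble the displayed multiline calculation that appears immediately before the statement. The key inputs are Proposition \ref{prop:main1}, identifying $\Delta(t^l)$ with $(t^l - t_2^l)\,d\log R$, the defining identity of the Whitehead torsion $R_{2,7} = f(R_{1,7})\,f(\tau(f))$, and the string-product computation from the Appendix identifying $x \star d\log \tau$ as multiplication. Expanding $d\log$ by Leibniz and using $f$-equivariance collapses $\Delta f(t^l)$ to $f(\Delta(t^l)) + f((t^l - t_2^l)\,d\log \tau(f))$, which is exactly \eqref{eqn:mainconj}. The only place where some care is required is checking that the formula for $R$ derived from Proposition \ref{prop:main1} is genuinely preserved on the nose by the reduction modulo the quotient defining $Q_\F$, but this is guaranteed by our entire setup.

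For the faithfulness statement, the \emph{if} direction is immediate because the right-hand side of \eqref{eqn:mainconj} depends on $\tau(f)$ only through $d\log \tau(f)$. For the \emph{only if} direction, given $\tau_1, \tau_2 \in Wh(\Z_7)$ with identical correction terms, I would plan to set $l = 1$ and reduce to injectivity of the multiplication map $(t - t_2)\cdot : \bar\Omega^1 \to Q_\F$ on the image of Dennis trace. This injectivity uses the observation, already emphasized in the definition of $A = \mathbb{Q}[t,t_2]/(\Sigma)$, that $(t - t_2)$ becomes a unit there via $(t - t_2)(t^6 + 2t^5 t_2 + \cdots + t_2^7) = 7 - \Sigma$. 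Thus $(t - t_2)\cdot$ is injective on $A$, and the further quotient defining $Q_\F$ kills only "pure" monomial contributions in $t$ or $t_2$ alone, which do not intersect the image of $(t-t_2)\,d\log \tau$ for any $\tau$ in the image of Dennis.

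The main obstacle will be the last sentence: verifying cleanly that no non-zero Dennis-trace class $\omega \in \bar\Omega^1$ has $(t - t_2)\omega$ lying entirely in the span of the pure-monomial relations defining $Q_\F$. Since $Wh(\Z_7)$ is finitely generated of small rank and its Dennis-trace image is explicit — the Example computes it for the generator corresponding to $f(t) = t^2$ — this reduces to a finite verification on a basis, completely parallel to the Example. Carrying this out carefully, and in particular making sure that reducing mod $7$ and passing from $Q_\Z$ to $Q_\F$ preserves the relevant injectivity, is where I would expect the bookkeeping to be densest.
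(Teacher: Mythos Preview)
Your proposal is correct and essentially matches the paper's approach. For the transformation formula you are reproducing exactly the displayed Leibniz computation that precedes the Proposition, and for the faithfulness clause the paper offers no argument beyond the earlier remark that the Whitehead group acts faithfully on the Reidemeister expressions together with the invertibility of $t-t_2$ in $A$, which is precisely the mechanism you isolate; your plan to fall back on a finite check on generators of $Wh(\Z_7)$ is more explicit than anything the paper writes down, but it is a sound way to handle the residual concern about the pure-monomial relations in $Q_\F$.
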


\subsection{More details on an example}
Let us make the previous example more concrete and show that the formula in the introduction is still true even without modding out $K$ (i.e. dropping the multiples of $\Sigma$). To that extend recall that the homotopy equivalence $f$ is constructed as
$$
L(1,7) \to S^3 \vee L(1,7) \overset{\operatorname{id} \vee (z_1^2, z_2^4)}{\longrightarrow} S^3 \vee L(2,7) \overset{\Phi}{\longrightarrow} L(2,7),
$$
where $\Phi\colon S^3 \to L(2,7)$ is any map of degree $-7$ (see \cite[section 6.4]{MnevLecture}.
\begin{Lem}
$$
f([\rho_{1,0}]) = [\rho_{2,3}]
$$
\end{Lem}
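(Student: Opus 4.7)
The plan is to compute $f_{*}[\rho_{1,0}]$ directly from the explicit decomposition $f = \Phi \circ (\operatorname{id} \vee g) \circ p$ and match the result with $[\rho_{2,3}]$.

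The starting point is the pointwise identity
\[
g(\rho_{1,0}(t, z_1, z_2)) = (e^{2t} z_1^{2}, e^{4t} z_2^{4}) = \rho_{2, 0}(t, g(z_1, z_2)),
\]
which shows that $L g \circ \rho_{1, 0} = \rho_{2, 0} \circ g$ as maps $L(1, 7) \to L L(2, 7)$. Since $g\colon L(1,7) \to L(2,7)$ has degree $8$ (by the standard degree formula for $(z_1, z_2) \mapsto (z_1^{a}, z_2^{b})$ on lens spaces), the ``$g$-piece'' of $f$ contributes $8[\rho_{2, 0}]$ to $f_{*}[\rho_{1, 0}]$ in $H_3(L_2 L(2, 7))$.

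The remaining contribution comes from the pinch map $p$ followed by $\Phi$. For each $x \in L(1, 7)$ whose $\rho_{1, 0}$-orbit crosses the pinched 3-cell, $p$ introduces an excursion into the $S^3$ wedge summand which $\Phi$ (of degree $-7$) then transports to a loop in $L(2, 7)$. I would choose the 3-cell invariant under the residual $S^{1}$-action so that the correction factors through the quotient $L(1, 7)/S^{1} \cong S^{2}$, and realize $\Phi$ concretely (e.g.\ as an orientation-reversed seven-fold cover $S^{3} \to L(2, 7)$). This lets one compute the correction class explicitly and, using the formulas for the differences $[\rho_{2, m+n}] - [\rho_{2, m}] \in H_1(L_2 L(2, 7))$ coming from the residual torus action, verify that the sum equals $[\rho_{2, 3}]$.

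The main obstacle is the careful geometric identification of the pinch-$\Phi$ correction, which must account for both its image in $\mathbb{Z} = H_3(L(2, 7))$ and in $H_1(L_2 L(2, 7)) = \mathbb{Z}_{7}$. A conceptually cleaner (but possibly harder) alternative is to show directly that $f \circ \rho_{1, 0}\colon L(1, 7) \to L L(2, 7)$ is homotopic to $\rho_{2, 3} \circ f$; this would immediately yield $f_{*}[\rho_{1, 0}] = (\rho_{2, 3})_{*}[L(2, 7)] = [\rho_{2, 3}]$, and reduces the problem to an obstruction computation in $H^{3}(L(1, 7); \pi_3 L(2, 7))$.
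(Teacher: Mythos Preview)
Your ``alternative'' is exactly the route the paper takes, but you have underestimated what it involves. Comparing $f\circ\rho_{1,0}$ with $\rho_{2,3}\circ(\operatorname{id}\times f)$ as maps $S^1\times L(1,7)\to L(2,7)$ (equivalently, $L(1,7)\to LL(2,7)$) relative to the subcomplex where they already agree, there are \emph{two} obstructions, not one: a primary one in $H^3$ with coefficients $\pi_3(L(2,7))=\Z$ (landing in $\Z_7$), and a secondary one in $H^4$ with coefficients $\pi_4(L(2,7))=\Z_2$. The paper kills the first by a degree comparison on the $3$-cell $I\times e_2$: one map has degree $1$, the other degree $2\cdot 25=50$, and $50\equiv 1\pmod{49}$. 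The $\Z_2$ obstruction is not handled by any degree argument; the paper instead identifies the ambiguity with multiplication by an order-$2$ element of $\pi_1(\operatorname{aut}_1(L(1,7)))$ and shows that the monoid map $\pi_1(\operatorname{aut}_1(L(1,7)))\to (H_3(LL(1,7)),\star)$ has image of order $49$, so any $2$-torsion dies. Your one-line reduction to ``$H^3(L(1,7);\pi_3 L(2,7))$'' misses this second step entirely, and that is where the actual content lies.

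Your primary approach (splitting $f$ into the $g$-piece and the pinch--$\Phi$ piece) is reasonable and the identity $Lg\circ\rho_{1,0}=\rho_{2,0}\circ g$ giving $8[\rho_{2,0}]$ is fine, but as you yourself note, the pinch--$\Phi$ correction is the whole difficulty and you do not compute it. The correction class must be pinned down in $H_3(L_2L(2,7))\cong\Z\oplus\Z_7$; its $\Z$-part is indeed $-7$ by degree, but determining the $\Z_7$-part geometrically from your description of $\Phi$ is no easier than the obstruction calculation above, and again the residual $\Z_2$ indeterminacy (from the non-uniqueness of $\Phi$ up to homotopy) has to be argued away. So neither branch of your proposal is yet a proof.
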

\begin{proof}
Let us first try to compare the maps $f\circ \rho_{1,0} \colon S^1 \times L(1,7) \to L(2,7)$ and $\rho_{2,3} \circ (\operatorname{id} \times f) \colon S^1 \times L(1,7) \to L(2,7)$ using obstruction theory. Away from a neighborhood of a point in $L(1,7)$ the two maps are given by
\begin{align*}
f\circ \rho_{1,0} \colon (t, z_1, z_2) &\longmapsto (e^{2t} z_1^2, e^{4t} z_2^4 ) \\
\rho_{2,3} \circ (\operatorname{id} \times f) \colon (t, z_1, z_2) &\longmapsto (e^{2t} z_1^2, e^{(4 + 7 \cdot 3) t} z_2^4 )
\end{align*}
Recalling the standard cell decomposition of $L(1,7)$ as
\begin{align*}
    e_0 &= \{ (1,0) \} \\
    e_1 &= \{ (e^s,0) \ | \ s \in (0, \tfrac{1}{7}) \} \\
    e_2 &= \{ (z_1 ,r) \ | \ r \in (0, 1) \} \\
    e_3 &= \{ (z_1, z_2) \ | \ z_2 = e^s r \text{ for $s\in (0, \tfrac{1}{7})$} \},
\end{align*}
we see that the two maps already coincide on $A := \{ 0 \} \times L(1,7) \cup S^1 \times e_0$. The first obstruction for these two maps being homotopic relative to $A$ lies in
$$
H^3(M/A ; \pi_3( L(2,7) ) = \Z_7.
$$
The obstruction is computed by comparing the two maps on the $3$-cell $I \times e_2$. Since the maps coincide on the boundary of that cell, they fit together to a map $S^3 \to L(2,7)$, i.e. an element in $\pi_3(L(2,7)) = \Z$, where the identification is by computing the degree and dividing by $7$. Thus it is enough to show that the degrees of the two maps restricted to $I \times e_2$ are equal mod $49$. For the map $f\circ \rho_{1,0}$, we note that $\rho_{1,0}$ maps the cell $I \times e_2$ homeomorphically onto $e_3$. Since $f$ has degree 1, we get a contribution of $1$. For the map $\rho_{2,3} \circ (\operatorname{id} \times f)$, we note that $f$ is given by $(z_1^2, z_2^4)$ on the cell $I \times e_2$ and hence we are computing the degree of the map
\begin{align*}
    (0, \frac{1}{7}) \times e_2 &\longrightarrow L(2,7) \\
    (t, z_1, r) &\longmapsto (e^{2t} z_1^2, e^{(4 + 7 \cdot 3) t} r^4 ).
\end{align*}
This map has degree $2 \cdot 25 = 50$ (it has the same degree as its $7$-fold cover $S^1 \times e_2 \to S^3$ given by the same formula but now $t \in [0,1]$). We see that the obstruction vanishes since $1 \cong 50 \text{ mod } 49$. We conclude that the two maps in question are homotopic at least up to the $3$-skeleton of $S^1 \times L(1,7)$. They could still potentially differ on their $4$-cell $I \times e_3$ by an element in
$$
H^4(M/A ; \pi_4(L(2,7)) = \Z_2.
$$
We can view $\rho_{1,0}$ as an element in $\pi_1(\op{aut}_1(L(1,7))$, where $\op{aut}_1(L(1,7)$ is the monoid of self-equivalences homotopic to the identity. Under this identification, the action of $H^4(M/A ; \pi_4(L(2,7)) = \Z_2$ corresponds to multiplication by the element
\begin{align*}
&S^1 \times L(1,7) \to (S1 \times L(1,7)) \vee S^4 \to (S^1 \times L(1,7)) \vee S^3 \\
&\to L(1,7) \vee L(1,7) \to L(1,7) \in \pi_1(\op{aut}_1(L(1,7))),
\end{align*}
which is an element of order $2$. However, it follows directly from the definition of the string product that
$$
\pi_1(\op{aut}_1(L(1,7))) \to (H_3(LL(1,7)), \star)
$$
is a morphisms of monoids. Moreover, the image is contained in $\bigcup_{i \in \Z_7} H_3(LM)$ and maps to the fundamental class $[M]$ under $H_3(LM) \to H_3(M)$. We also saw that all these classes are of the form $\rho_{l,m}$. Thus we conclude that the image of $\pi_1(\op{aut}_1(L(1,7)))$ is a group of order $49$ and hence any element of order $2$ gets sent to zero. This shows that indeed
$$
f([\rho_{1,0}]) = [\rho_{2,3}] \in H_3(L L(2,7)),
$$
where $M = L(2,7)$.
\end{proof}
We are now ready to evaluate equation \eqref{eqn:mainconj} for $x = [\rho_{1,0}]$. The left hand side is given by
\begin{align*}
\Delta f([\rho_{1,0}]) =& \Delta [\rho_{2,3}] \\
=& t t_2 \frac{dt}{t} \\
& + 4 \cdot 3 ( t + t^2 t_2^6 + t^3 t_2^5 + t^4 t_2^4 + t^5 t_2^3 + t^6 t_2^2 + t_2) \frac{dt}{t} + 4 (t^4 t_2^5 + t t_2 + t^5 t_2^4 ) \frac{dt}{t} \\
=& 12 ( t + t^2 t_2^6 + t^3 t_2^5 + t^4 t_2^4 + t^5 t_2^3 + t^6 t_2^2 + t_2) \frac{dt}{t} + (4 t^4 t_2^5 +5 t t_2 +4 t^5 t_2^4 ) \frac{dt}{t}
\end{align*}
reading off formula \eqref{eqn:rhocop}.
For the right-hand side we obtain
$$
f(\Delta [\rho_{0,1}]) + f( [\rho_{1,0}] \star d\log\tau(f)) +  f( [\rho_{1,0}] \star d\log\tau(f)) =  f( (t - t_2) d\log\tau(f) ),
$$
using that $[\rho_{0,1}]$ has no self-intersections and the calculation of the string product in the appendix and introducing a Koszul sign. We already calculated the image of the Whitehead torsion under the Dennis trace map in the example above, that is
$$
d\log\tau(f) = (6 +  5 t + 6 t^2 + t^3 + 2 t^4 + t^5 + 2 t^6) dt \in HH_1(\Z[\Z_7]) = H_1(LM),
$$
the map $H_1(LM) \to H_1(LM \times LM) \to H_1(LM) \otimes H_0(LM)$ given by the diagonal and reversing the circle on the second factor sends a monomial $t^l \frac{dt}{t}$ to $t^l t_2^{-l} \frac{dt}{t}$ and hence is homogenization. We hence compute as in the above example (this time without dropping $\Sigma$ terms)
\begin{align*}
    f((t - t_2) d\log \tau(f)) &= (4 + 2 t^2 t_2^5 + 6 t^6 t_2 + 2 t^3 t_2^4) d(t^2) \quad + \quad 6 ( 1 + t t_2^6 + t^2 t_2^5 + t^3 t_2^4 + t^4 t_2^3 + t^5 t_2^2 + t^6 t_2) d(t^2) \\
    &= (4 + 2 t^2 t_2^5 + 6 t^6 t_2 + 2 t^3 t_2^4) 2 t dt \quad + \quad 12 ( 1 + t t_2^6 + t^2 t_2^5 + t^3 t_2^4 + t^4 t_2^3 + t^5 t_2^2 + t^6 t_2) dt\\
    &= (t^2 + 4 t^4 t_2^5 + 5 t t_2 + 4 t^5 t_2^4) \frac{dt}{t} \quad + \quad 12 ( t + t^2 t_2^6 + t^3 t_2^5 + t^4 t_2^4 + t^5 t_2^3 + t^6 t_2^2 + t_2) \frac{dt}{t}\\.
\end{align*}
Thus we see that the two sides of equation \eqref{eqn:mainconj} coincide up to the term $t^2 \frac{dt}{t}$ which corresponds to an element in $H_1(LM) \otimes H_0(M)$ and is hence zero in $H_1(LM, M) \otimes H_0(LM,M)$.

\section*{Appendix}
\subsection{String product}
We compute the string product 
$$
H_3(LM) \otimes H_1(LM) \to H_1(LM).
$$
Since the string coproduct is compatible with the projection to $M$, the only thing to check is which component we land in. It is then clear that
$$
[\rho_{l,m}] \star \omega \mapsto t^l \omega
$$
for any $\omega \in \Omega^1$.

\subsection{Transverse string topology}
We show that the transverse calculation of the string coproduct is indeed the invariantly defined string coproduct by comparing it with the definition in \cite{NaefWillwacher}. Recall that in loc. cit. the string coproduct is defined by the following zig-zag of spaces
\begin{equation}
\label{eqn:defredcop}
\begin{tikzcd}
\frac{LM}{M} \ar[dashed]{r}{\text{suspend}} &\frac{I \times LM}{\partial I \times LM\cup I\times M} \ar[r, "s"] & \frac{\Map(\bigcirc_2)}{F} \ar[r] & \frac{\Map(\bigcirc_2) / \Map^\prime(\bigcirc_2)}{ F/ F|_{UTM}} && \\
&&& \frac{\Map(8) / \Map^\prime(8)}{F / F|_{UTM}} \ar[u, "\simeq"] \ar[r, dashed, "Th"] & \frac{\Map(8)}{F} \ar[r] & \frac{LM \times LM}{LM \times M \cup M \times LM},
\end{tikzcd}
\end{equation}
where dashed arrows are only defined on homology and we used the following notations.
\begin{itemize}
    \item $UTM$ is the unit tangent bundle.
    \item $FM_2(M)$ is the compactified configuration space of two points, namely it is obtained from $M \times M$ by a real oriented blowup along the diagonal. It is a manifold with boundary $UTM$ and homotopy equivalent to $M \times M \setminus M$ and fits into the following commuting diagram
    \begin{equation*}
        \begin{tikzcd}
            UTM \ar[r] \ar[d] & FM_2(M) \ar[d] \\
            M \ar[r] & M \times M.
        \end{tikzcd}
    \end{equation*}
    \item $\Map(\bigcirc_2)$ is simply $LM$ thought of as a fibration over $M \times M$ given by evaluating the loop at time $0$ and $\frac{1}{2}$.
    \item Taking the pullback of $\Map(\bigcirc_2)$ along the above square we obtain
    \begin{equation*}
        \begin{tikzcd}
            \Map^\prime(8) \ar[r] \ar[d] & \Map^\prime(\bigcirc_2) \ar[d]\\
            \Map(8) \ar[r] & \Map(\bigcirc_2).
        \end{tikzcd}
    \end{equation*}    
    \item $F$ is $LM \sqcup LM \to M$.
    \item The map $s$ reparametrizes a loop. It takes a parameter $t \in I$ and a loop $\gamma$ and reparametrizes it in such a way that the path $\gamma_{[0,t]}$ is run through on the interval $[0, \frac{1}{2}]$ and the path $\gamma_{[t,1]}$ is run through on the inverval $[\frac{1}{2}, 1]$.
    \item The map $Th$ is capping with the Thom class in $H^n(M, UTM)$.
\end{itemize}

Let us first formulate the following
\begin{Lem}
Let $\alpha \colon N \to LM$ be transverse in the sense that
\begin{enumerate}
    \item [i)] $\frac{\partial}{\partial t} \alpha(u,t)$ is non-zero at $t=0$.
    \item [ii)] The map $\bar{\alpha} \colon N \times (0,1) \to M \times M$ given by $(n,t) \mapsto (\alpha(n,0), \alpha(n,t))$ intersects the diagonal transversely in a compact submanifold $V \subset N \times (0,1)$,
\end{enumerate}
then there is a unique map $\hat{\alpha} \colon \widehat{N \times I}^V \to FM_2(M)$ from the real oriented blowup of $N \times I$ at $V$ denoted by $\widehat{N \times I}^V$ to the compactified configuration space of two points such that
\begin{equation*}
    \begin{tikzcd}
    \widehat{N \times I}^V \ar[r] \ar[d] & FM_2(M) \ar[d] \\
    N \times I \ar[r] & M \times M
    \end{tikzcd}
\end{equation*}
commutes. Moreover, $\hat{\alpha}$ identifies the unit normal bundle of $V$ in $N \times (0,1)$ with $\bar{\alpha}|_V^* UTM$.
\end{Lem}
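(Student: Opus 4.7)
\medskip

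\noindent\textbf{Proof proposal.}
The approach is to build the lift $\hat\alpha$ piece by piece using the local structure of the real oriented blowup, first on the complement of the critical locus, then extending across $V$ and across the boundary $N \times \partial I$ separately. Recall that $FM_2(M)$ is, by definition, the real oriented blowup of $M\times M$ along the diagonal $\Delta$, so it coincides with $(M\times M)\setminus\Delta$ away from the exceptional divisor, and its exceptional divisor is $UTM = S(N_{\Delta/M\times M})$. The lift on the complement $(N\times I)\setminus(V \cup N\times \partial I)$ is immediate and unique: here $\bar\alpha$ avoids $\Delta$, so it factors uniquely through $FM_2(M)\setminus UTM$.

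Next I would extend across $V$. Because $\bar\alpha$ is transverse to $\Delta$ along $V$ by hypothesis (ii), the differential $d\bar\alpha$ sends the normal bundle $N_{V/N\times (0,1)}$ isomorphically to $\bar\alpha|_V^{\,*} N_{\Delta / M\times M}$, and the latter is canonically identified with $\bar\alpha|_V^{\,*} TM$. The universal property of real oriented blowup (naturality under transverse maps) then produces a unique smooth extension of $\bar\alpha$ from a neighborhood of $V$ to a map $\widehat{N\times I}^V \to FM_2(M)$; on the exceptional sphere bundle $S(N_{V/N\times (0,1)})$ this extension is the unit-sphere-bundle map induced by the isomorphism above, which is exactly the identification with $\bar\alpha|_V^{\,*}UTM$ claimed in the second sentence of the lemma.

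To extend across $N\times\{0\}$ (and symmetrically $N\times\{1\}$, using $\alpha(n,1)=\alpha(n,0)$) I would use condition (i). For $t$ small we can write $\alpha(n,t) = \alpha(n,0) + t\,\partial_t\alpha(n,0) + O(t^2)$, so in local coordinates
\[
\bar\alpha(n,t) - (\alpha(n,0),\alpha(n,0)) = t\bigl(0,\partial_t\alpha(n,0)\bigr) + O(t^2),
\]
and condition (i) guarantees $\partial_t\alpha(n,0)\neq 0$. Thus as $t\to 0^+$ the unit direction in which $\bar\alpha$ approaches $\Delta$ converges to the unit vector determined by $\partial_t\alpha(n,0)\in T_{\alpha(n,0)}M \cong N_{\Delta/M\times M}|_{\alpha(n,0)}$, giving a continuous (in fact smooth, by the usual blowup-in-coordinates argument) lift of $N\times\{0\}$ to $UTM\subset FM_2(M)$. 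The same argument handles $N\times\{1\}$ via reparametrization.

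Finally I would verify uniqueness and compatibility. Uniqueness is automatic because any two lifts agree on the dense open $(N\times I)\setminus (V\cup N\times\partial I)$ where the downstairs map factors uniquely, and $\widehat{N\times I}^V \to N\times I$ is a continuous surjection onto a Hausdorff space. The only routine point I expect to require care is checking that the local extensions near $V$ and near $N\times\partial I$ glue smoothly; this follows from writing the blowup in polar coordinates on tubular neighborhoods and observing that in both cases the resulting formula for $\hat\alpha$ is smooth in the polar variables. The main conceptual obstacle is the coexistence of two very different reasons for hitting the diagonal (the interior transverse locus $V$ versus the boundary locus $N\times\partial I$, which is \emph{not} transverse but instead controlled by (i)); separating these two mechanisms, as done above, is what makes the argument go through cleanly.
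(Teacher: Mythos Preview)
Your proposal is correct and follows essentially the same route as the paper: use condition~(i) to lift near $N\times\partial I$ and condition~(ii) (transversality) to lift across $V$, with uniqueness coming from density. The only stylistic difference is that the paper writes down the explicit coordinate formula $(n,t)\mapsto \frac{\bar\alpha(n,t)-\bar\alpha(n,0)}{|\bar\alpha(n,t)-\bar\alpha(n,0)|}$ and observes it extends smoothly to the blowup, whereas you phrase the same step via the universal property of real oriented blowup under transverse maps.
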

\begin{proof}
In local coordinates the map $M \times M \setminus M \to FM_2(M)$ looks like
\begin{align*}
    \R^n \times \R^n \setminus \R^n &\to \R^n \times S^{n-1} \times [0,\infty) \\
    (x,y) &\mapsto (x-y, \frac{x-y}{|x-y|}, |x-y| ).
\end{align*}
Composing with $\bar{\alpha}$ we readily see that condition i) is sufficient (and necessary) to lift the map $N \times I \to M \times M$ to $FM_2(M)$ in a neighborhood of $N \times \partial I$. To obtain the statement away from the boundary we observe that the function (in coordinates)
$$
(n,t) \to \frac{\bar{\alpha}(n,t) - \bar{\alpha}(n,0)}{|\bar{\alpha}(n,t) - \bar{\alpha}(n,0)|}
$$
smoothly extends from $N \times I \setminus V$ to $\widehat{N \times I}^V$.
\end{proof}

Such a transverse map $\alpha: N \to LM$ naturally defines a map
\begin{align*}
    \Delta(\alpha) \colon V &\to LM \times LM \\
    (n,t) &\mapsto (s \mapsto \alpha(n, st), s \mapsto \alpha(n, (1-t)s + t)).
\end{align*}

The following is a special case of Proposition 3.13 in \cite{HingstonWahl} adapted to our notation.
\begin{Prop}
Let $\alpha: N \to LM$ be transverse in the sense of the previous Lemma. Then
$$
\Delta(\alpha_*([N])) = (\Delta \alpha)_*([V]) \in H_*(LM \times LM, M \times LM \cup LM \times M)
$$
\end{Prop}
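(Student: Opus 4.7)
The plan is to chase the fundamental class $\alpha_*([N])$ through the zig--zag \eqref{eqn:defredcop} and verify at each step that the transverse model from the previous Lemma computes the correct relative class. Throughout, I will use the map $\hat{\alpha}\colon \widehat{N \times I}^V \to FM_2(M)$ produced by the Lemma and the identification of the unit normal bundle of $V$ with $\bar{\alpha}|_V^* UTM$; these are exactly the ingredients needed to control the Thom class step, which is the only non--trivial part of the zig--zag.

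First I would set up a compatible transverse representative along the zig--zag. The suspension of $\alpha_*([N])$ is represented by the product map $N \times I \to LM$ together with its natural collapse at $\partial I$ and $M$; composing with the reparametrization map $s$ gives a map $N \times I \to \Map(\bigcirc_2)$, which together with $\bar{\alpha}$ fits into a commuting square over $M \times M$. By the Lemma, pulling back $\Map'(\bigcirc_2) \to \Map(\bigcirc_2)$ along this square, one gets a canonical lift $\widehat{N \times I}^V \to \Map'(\bigcirc_2)$ from the real oriented blowup, compatible with $\hat{\alpha}$ on the base. Thus the image of $\alpha_*([N])$ in $H_*(\Map(\bigcirc_2)/\Map'(\bigcirc_2) \ / \ F/F|_{UTM})$ is represented by the pair $(N \times I, \widehat{N \times I}^V)$, or equivalently by the Thom space of the normal bundle of $V$, via excision of the blowup.

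Next I would evaluate the dashed ``Thom'' arrow. Since the Lemma identifies the unit normal bundle of $V$ in $N \times (0,1)$ with $\bar{\alpha}|_V^* UTM$, capping with the Thom class in $H^n(M, UTM)$ pulled back to $\Map(8)$ produces, via the standard Thom isomorphism for the pair $(N \times I, \widehat{N \times I}^V)$, precisely the fundamental class $[V]$ pushed forward to $\Map(8)$. This is the crucial computational step: I would check the orientation conventions agree with those fixed in \cite{NaefWillwacher} (so that the Thom class used there matches the one coming from the orientation of $M$), and verify that the map from $V$ to $\Map(8)$ produced in this way sends $(n,t)$ to the figure--eight loop obtained by concatenating $\alpha(n,\cdot)|_{[0,t]}$ and $\alpha(n,\cdot)|_{[t,1]}$. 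Finally, the map $\Map(8)/F \to (LM \times LM)/(LM \times M \cup M \times LM)$ sends this figure--eight exactly to $(\Delta\alpha)(n,t)$ as defined in the statement, so the image of $\alpha_*([N])$ in the right--hand space is $(\Delta\alpha)_*([V])$.

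The main obstacle I expect is the Thom class computation: one has to argue carefully that the blowup construction from the Lemma, combined with the identification of normal bundles with $UTM$, realizes the Thom class cap as a genuine geometric intersection, and that orientations match. Assuming the conventions of \cite{NaefWillwacher} and \cite{HingstonWahl} are fixed consistently (both measure the normal framing of $V$ using the orientation of $M$ via $UTM$), the rest of the argument is a functorial diagram chase. In fact, once this step is set up the statement is essentially the content of \cite[Prop.~3.13]{HingstonWahl}, so one may simply cite loc.~cit.\ once the translation between the blowup model here and their Pontryagin--Thom model is spelled out.
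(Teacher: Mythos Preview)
Your proposal is correct and follows essentially the same approach as the paper: both chase $[N]$ through a parallel $N$--level version of the zig--zag \eqref{eqn:defredcop}, use the Lemma to lift to the blowup and identify the unit normal bundle of $V$ with $\bar{\alpha}|_V^* UTM$, and reduce everything to the Thom--class step. The paper packages this as one large commutative diagram and then verifies the Thom step by localizing at a point of $V$ and reducing to the linear model $\bar{\alpha}\colon \R^{d+1}\to\R^n$, whereas you describe the same chase narratively and appeal to the standard Thom isomorphism; both note that this is a special case of \cite[Prop.~3.13]{HingstonWahl}.
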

\begin{proof}
One checks that the following diagram commutes, where all the maps are the "obvious" ones.
\begin{equation*}
\begin{tikzcd}[row sep=scriptsize, column sep=scriptsize]
&\frac{N}{\varnothing} \ar[dashed]{r}{\text{suspend}} \ar[ddl] &\frac{I \times N}{\partial I \times N} \ar[r, equal] \ar[ddl] & \frac{I \times N}{\partial I \times N} \ar[r] \ar[ddl] & \frac{I \times N / \widehat{I \times N}^V}{\partial I \times N /\partial I \times N} \ar[ddl] & \\
&&&& \frac{V / \bar{\alpha}|_V^* UTM}{\varnothing / \varnothing} \ar[u, "\simeq"] \ar[r, dashed, "Th"] \ar[ddl] & \frac{V}{\varnothing} \ar[ddl] \ar[dd] \\ 
\frac{LM}{M} \ar[dashed]{r}{\text{suspend}} &\frac{I \times LM}{\partial I \times LM\cup I\times M} \ar[r, "s"] & \frac{\Map(\bigcirc_2)}{F} \ar[r] & \frac{\Map(\bigcirc_2) / \Map^\prime(\bigcirc_2)}{ F/ F|_{UTM}} && \\
&&& \frac{\Map(8) / \Map^\prime(8)}{F / F|_{UTM}} \ar[u, "\simeq"] \ar[r, dashed, "Th"] & \frac{\Map(8)}{F} \ar[r] & \frac{LM \times LM}{LM \times M \cup M \times LM},&
\end{tikzcd}
\end{equation*}
The only thing left to show is that after taking homology the upper zig-zag sends the fundamental class of $N$ to the fundamental class of $V$. Namely, we have to show that under
\begin{equation*}
    \begin{tikzcd}
    H_d(N) \ar[r] & H_{d+1}(I \times N, \partial I \times N) \ar[r] & H_{d+1}(I \times N, \widehat{I \times N}^V) && \\
    && \ar[u, "\simeq"] H_{d+1}(V, \bar{\alpha}|_V^* UTM) \ar[r, "Th"] & H_{d+1-n}(V)
    \end{tikzcd}
\end{equation*}
where $d = \op{dim}(N)$, the class $[N]$ gets sent to $[V]$. First note that the Thom isomorphism here is given by capping with a Thom class that is the pullback of the Thom class on $H_n(M ,UTM)$ along $\bar{\alpha}|_V$. This Thom class is also the natural Thom class by considering $\bar{\alpha}|_V^* UTM$ as the oriented normal bundle of $V$ in $I \times N$. Hence, apart from our insistence on avoiding tubular neighborhoods, we obtained the standard description of the intersection pairing from which it follows that $[N]$ is sent to $[V]$. To see this more concretely, we note that it is enough to show that composing with $H_{d+1-n}(V) \to H_{d+1-n}(V, V \setminus \{ x \} )$ sends $[N]$ to the generator in $H_{d+1-n}(V, V \setminus \{ x \} )$. Thus the situation is local and we can assume that $N = \R^d$ and $\bar{\alpha} \colon \R^{d+1} \to \R^n$ is a linear projection. In this case the statement follows directly from the definitions.
\end{proof}

\bibliographystyle{hplain}
\bibliography{main}

\end{document}